\newtheorem{prop}{Proposition}
\newtheorem{theo}[prop]{Theorem}
\theoremstyle{definition}
\newtheorem{rema}[prop]{Remark}
\newtheorem{defi}[prop]{Definition}
\newcommand{\NN}{\mathbb{N}}
\renewcommand{\SS}{\mathbb{S}}
\newcommand{\cB}{\mathcal B}
\renewcommand{\cL}{\mathcal L}
\DeclareMathOperator{\area}{area}
\define{\sff}{\text{\rm II}}
\define{\tfsff}{\accentset{\circ}{\sff}}
\let\oldmarginpar\marginpar
\renewcommand\marginpar[1]{\-\oldmarginpar[\raggedleft\footnotesize #1]%
{\raggedright\footnotesize #1}}
\DeclareMathOperator{\Index}{index}
\DeclareMathOperator{\genus}{genus}
\title{A note on minimal surfaces with bounded index}
\address{Department of Mathematics, University of Pennsylvania}
\author{Davi Maximo}
\date{August 24, 2018}
\begin{document}
\begin{abstract}
For any closed Riemannian three-manifold, we prove that for any sequence of closed embedded minimal surfaces with uniformly bounded index, the genus can only grow at most linearly with respect to the area.
\end{abstract}

\maketitle

\section{Introduction}

There has been recently spectacular progress on the existence theory of minimal hypersurfaces led by the landmark work of F. Marques and A. Neves on the Almgren-Pitts minmax theory \cite{MaNe14, MaNe17, MaNe16, MaNe18} and the volume spectrum \cite{LiMaNe18} (jointly with Y. Liokumovich). For instance, by Marques-Neves \cite{MaNe17} and A. Song \cite{Song18}, we now know that any closed Riemannian manifold of dimension $3\leq n\leq 7$ contains {\it infinitely many} smoothly embedded closed minimal hypersurfaces. In addition, when the metric on $M$ is {\it generic}, Irie-Marques-Neves \cite{IrMaNe18} showed that the union of embedded minimal hypersurfaces is {\it dense}. Soon after,  Marques-Neves-Song \cite{MaNeSo17} proved that there is a sequence of such minimal hypersurfaces that is actually {\it equidistributed}. Finally, in 3 dimensions and again for generic metrics,  Chodosh-Mantoulidis \cite{CM18} showed the existence of minimal surfaces with arbitrarily large area, genus, and Morse index.

It is interesting to ask in general how the index, topology, and geometry (area and curvature) of minimal surfaces relate to each other. The answer can be subtle. For instance, in presence of positive Ricci curvature, Choi--Schoen \cite{ChSc85} proved that a sequence of minimal surfaces with bounded genus must have bounded area, curvature, and index. In a general Riemannian three-manifolds, however, it is not possible to bound the area nor the index of an embedded minimal surface by the genus, even if one assumes positive scalar curvature, as it can be seen in examples constructed by Colding--De Lellis \cite{CoDe05}. On the other hand, jointly with O Chodosh and D. Ketover \cite{CKM17}, we were able to show that manifolds positive scalar curvature, uniform index bounds do imply uniform area and genus bounds. However, without ambient curvature assumptions, there can be sequences of minimal surfaces with uniformly bounded index but arbitrarily large genus and area \cite{CKM17}.
 
The purpose of this note is to prove the following estimate:
\begin{theo}\label{thm:main}
Let $(M^3,g)$ be a fixed closed Riemannian three-manifold. For any natural number $I$, there exists a constant $C=C(M,g,I)>0$ such that if $\Sigma_j$ is a sequence of two-sided, closed, connected, embedded minimal surfaces with $\Index(\Sigma_j)\leq I$, then
  $$\genus(\Sigma_j)<C \area(\Sigma_j).$$ 
\end{theo}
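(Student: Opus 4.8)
The plan is to localize the geometry: cover $\Sigma_j$ by a controlled number of intrinsic balls on which $\Sigma_j$ looks ``simple,'' and argue that each ball carries a uniformly bounded amount of topology. The point is that the failure of stability is concentrated on an index-$I$ set, so away from at most $I$ bad points the surface is stable, and on a stable minimal surface in a fixed ambient manifold one has curvature estimates of Schoen type: $|A_{\Sigma_j}|^2(x)\,\mathrm{dist}_{\Sigma_j}(x,\partial B)^2 \leq C(M,g)$. Concretely, I would first choose a maximal collection of points $p_1,\dots,p_N \in \Sigma_j$ that are $2r_0$-separated in the intrinsic metric, where $r_0 = r_0(M,g)$ is small and fixed; the intrinsic balls $B(p_i, r_0)$ then cover $\Sigma_j$. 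Since $\Sigma_j$ is minimal in a fixed compact manifold, the monotonicity formula gives a uniform lower bound $\area(B(p_i,r_0)) \geq c_0(M,g)\, r_0^2$, hence
\begin{equation*}
N \leq \frac{\area(\Sigma_j)}{c_0 r_0^2} = C_1(M,g)\,\area(\Sigma_j).
\end{equation*}
So it suffices to bound the genus contributed by each ball by a constant depending only on $M$, $g$, and $I$.

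Next I would control each ball. Enlarging slightly, work on $B(p_i, 2r_0)$. By the index assumption there is a subspace of dimension at most $I$ on which the stability form is negative; correspondingly one can find at most $I$ small disjoint intrinsic balls $B(q_k, \rho)$ whose removal leaves $\Sigma_j$ stable on the complement — this is the standard ``index forces stability off finitely many points'' mechanism (a logarithmic cutoff argument showing that if $\Sigma_j$ were unstable on too many disjoint small balls, the index would exceed $I$). On the stable part $B(p_i,2r_0)\setminus \bigcup_k B(q_k,\rho)$, Schoen's curvature estimate bounds $|A_{\Sigma_j}|$ in terms of distance to the boundary and to the $q_k$'s. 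A surface with bounded second fundamental form and bounded area in a fixed ambient manifold, when restricted to a ball of definite size, is a union of a bounded number of graphical sheets over a plane in normal coordinates, each a near-minimal graph — in particular each component of $B(p_i,r_0)\cap \Sigma_j$ lying in the stable region is a disk. Thus the only topology in $B(p_i, r_0)$ is produced near the finitely many (at most $I$) concentration points $q_k$.

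Then I would bound the topology near a single concentration point. Here one passes to the blow-up: rescale $\Sigma_j$ around $q_k$ so that $\max |A| = 1$ is attained, and extract (after passing to a subsequence, using $\varepsilon$-regularity away from $q_k$) a complete embedded minimal surface $\Sigma_\infty$ in $\RR^3$ with $|A_{\Sigma_\infty}|\leq 1$, index at most $I$, and quadratic area growth (inherited from the ambient monotonicity bound and the area bound on $\Sigma_j$ restricted to the ball). A complete embedded minimal surface in $\RR^3$ with finite index has finite total curvature (this is classical, going back to Fischer-Colbrie), and one has the explicit bound: by the Gauss–Bonnet/Osserman theory, finite total curvature $\int_{\Sigma_\infty}|A|^2 = 8\pi\,(\text{something controlled by the index and number of ends})$, and Chodosh–Ketover–Maximo type bounds give $\genus(\Sigma_\infty) + (\text{ends}) \leq C(I)$. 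Quantifying the blow-up so that this limiting genus bound descends to a genus bound for $\Sigma_j\cap B(q_k, 2\rho)$ — i.e. showing that no additional handles are hidden below the scale where the blow-up sees them — is the crux. The mechanism is a ``no-neck'' / removable-singularity argument: between the scale $\rho$ and the scale of the blow-up the surface is stable and hence graphical, so it cannot carry topology; combined with the uniform index bound this rules out an infinite cascade of bubbles, leaving at most $C(I)$ handles per concentration point.

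Summing up: each of the $N$ balls contributes genus at most $I\cdot C(I) = C_2(I)$, so
\begin{equation*}
\genus(\Sigma_j) \leq C_2(I)\, N \leq C_2(I)\, C_1(M,g)\,\area(\Sigma_j) =: C(M,g,I)\,\area(\Sigma_j),
\end{equation*}
using the standard fact that genus is subadditive over a covering by the nerve. I expect the main obstacle to be the third step: making the blow-up analysis quantitative and uniform in $j$, in particular ensuring that when one rescales at a concentration point the index budget of $I$ genuinely caps the number of bubbles and the genus of each bubble simultaneously, with no topology leaking at intermediate scales. This requires the sharp finite-total-curvature estimates for finite-index minimal surfaces in $\RR^3$ together with a careful neck analysis; the area and monotonicity inputs from the fixed compact ambient $(M,g)$ are comparatively routine.
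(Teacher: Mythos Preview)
Your strategy differs from the paper's. Both rely on the index-concentration/surgery results of \cite{CKM17}: the paper cites Theorem~\ref{thm:ckm} as a black box to replace $\Sigma_j$ by surfaces $\widetilde\Sigma_j$ of uniformly bounded curvature with comparable genus and area, whereas you sketch that reduction yourself via blow-up and neck analysis (your ``third step'' is exactly the hard content of that theorem, and is considerably more delicate than the sketch indicates). Where the arguments genuinely diverge is in extracting $\genus\leq C\cdot\area$ once the curvature is bounded. The paper argues by contradiction: pass to a subsequence, take the smooth lamination limit $\widetilde\cL$, construct a branched surface $B$ fully carrying both $\widetilde\cL$ and the $\widetilde\Sigma_j$, and observe that each $\widetilde\Sigma_j$ is encoded by nonnegative integer weights $(x_1^j,\dots,x_N^j)$ on the finitely many branch sectors; both $\area(\widetilde\Sigma_j)$ and $|\chi(\widetilde\Sigma_j)|$ are then linear in $\sum_i x_i^j$, contradicting $\genus/\area\to\infty$. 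Your route is a direct covering argument with no limiting object.

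That direct route can be made to work, but the closing step as written has a real gap. The phrase ``the standard fact that genus is subadditive over a covering by the nerve'' is carrying all the weight, and it is not a standard fact---indeed as stated it is false: any closed surface of arbitrary genus is covered by topological disks, so bounding the genus of each piece by zero yields nothing. What you actually need is a bound on $|\chi|$ via the nerve of a \emph{good} cover with \emph{uniformly bounded multiplicity}: if each set and each nonempty finite intersection is contractible and every point lies in at most $K$ members, the nerve has $O_K(N)$ simplices and $|\chi|\leq C(K)N$. Verifying those hypotheses is not automatic here. On a surface with bounded second fundamental form the intrinsic injectivity radius need not be bounded below (think of a thin flat cylinder, or the short closed geodesics that necessarily appear on high-genus surfaces), so intrinsic balls of fixed radius are not disks; and you mix intrinsic and extrinsic balls freely---intrinsic separation for disjointness, extrinsic monotonicity for the area lower bound, then ``components of $B(p_i,r_0)\cap\Sigma_j$''---which do not interchange. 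If instead you fix a finite cover of $M$ by small extrinsic balls and count graphical sheets of $\widetilde\Sigma_j$ in each, you do get a CW structure with $O(\area)$ cells and hence $|\chi|=O(\area)$; but the bookkeeping of how sheets in adjacent balls glue is precisely what the paper's branched-surface construction formalizes.
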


The proof of Theorem \ref{thm:main} relies on the local structure and surgery theorems proven in \cite{CKM17} which give a complete picture of how a sequence with uniformly bounded index can degenerate. Tools from there-manifold topology, namely methods of laminations and branched surfaces, are also key.

\begin{rema}
	The proof of Theorem \ref{thm:main} can be modified by routine arguments, as in \cite{CKM17}, so that the result remains true for one-sided minimal surfaces.
\end{rema}

\begin{rema}
The estimate in Theorem \ref{thm:main} is essentially sharp as seen by the following example. In \cite{Ja70}, Jaco constructed a sequence of incompressible surfaces $S_k$ in $S \times \SS^{1}$, where $S$ is a closed oriented surface of genus $r>1$. These surfaces have the property that, for each $k$, the projection $p:S \times \SS^{1} \rightarrow S$ restricted to $S_k$ is a covering map of $S$ of order $k$. In particular, their Euler characteristic satisfy $\chi(S_k)=k\chi(S)$, so the genus of $S_k$ grows linearly in $k$.

Fixing a constant curvature metric in $S$, we may then minimize area  in each homotopy class of $\Sigma_k$ in $S \times \SS^{1}$ using \cite{ScYa79} and obtain as a result a stable minimal surface $\Sigma_k$ which is embedded (after passing to a one-sided quotient, if necessary) by \cite{FHS83}.  These will have uniformly bounded curvature in $k$, by \cite{Sch83}, and thus area growth at least linear in $k$ since $p|_{\Sigma_k}: \Sigma_k \rightarrow S$ is a $k$-cover.
\end{rema}

\begin{rema}
Colding--Minicozzi \cite{CoMi00} showed that there is a $C^{2}$-open set of metrics $g$ on any manifold $M$ so that there is a sequence of embedded stable minimal tori $\Sigma_{j}$ with $\area_{g}(\Sigma_{j})\to\infty$ and thus the reverse inequality in Theorem \ref{thm:main} does not hold in general. 
\end{rema}

\begin{rema}
The estimate in Theorem \ref{thm:main} seems to be new in the literature even in the particular case of stable minimal surfaces.  
\end{rema}

\begin{rema}
	H. Rosenberg and B. Meeks have told me they have a different approach to obtain a similar bound to Theorem \ref{thm:main}.
\end{rema}

\subsection{Acknowledgements} I would like to thank H. Rosenberg for asking me whether such bound were true. I would also like to thank  O. Chodosh and J. Nogueira for interesting discussions about laminations.

\section{Preliminaries}

\subsection{Minimal laminations} Minimal laminations are a natural generalization of minimal surfaces. They are very convenient in dealing with a sequence of surfaces without area bounds. We recall their definition:

\begin{defi}
A closed set $\cL$ of $M^3$ is called a \textit{minimal lamination} if it is the union of pairwise disjoint, connected, injectively immersed minimal surfaces, which we call \textit{leaves}. For each point $x\in \cL$, we ask for the existence of a neighborhood $\Omega$ of $x$ and a $C^{0,\alpha}$ local coordinate chart $\Phi:\Omega \rightarrow \mathbb{R}^3$ under which image the leaves of $\cL$ pass through in slices of the form $\mathbb{R}^2\times\{t\} \cap \Phi(\Omega)$.
\end{defi}

\begin{rema}\label{rem:stable}
As an illustration of how laminations naturally appear when dealing with embedded minimal surfaces with no area bounds, consider a sequence of closed stable minimal surfaces $\Sigma_j$ on $(M^3,g)$. Suppose their areas are blowing up, $i.e.,$ $|\Sigma_j|\rightarrow\infty$\footnote{Such sequence of stable minimal surfaces with unbounded area appear, for example, in \textit{every metric} of the 3-torus, see Example 1.13 of \cite{CKM17}}. By curvature estimates of Schoen \cite{Sch83}, $\Sigma_j$ must have uniformly bounded second fundamental form, that is: 
$$|\textrm{II}_{\Sigma_j}|\leq C$$
for some $C>0$. Thus, after a subsequence, $\Sigma_j$ converges locally smoothly to a minimal lamination $\cL$ (see, for instance, page 475 by \cite{MeRo06}). 
\end{rema}

\subsection{Branched surfaces}\label{sec:branched} To analyze laminations that might appear as a sequence of minimal surfaces, we will consider branched surfaces that carry them. We remark that such methods were pioneered by Williams \cite{Wi74} and have been successfully used in geometry topology on several occasions, $e.g.$ see \cite{Li06,Li07} and, more recently, \cite{CoGa18}. They are higher dimensional analogs of train tracks carrying geodesic laminations, see \cite{PeHa92}.

\begin{defi}
A {\it branched surface} $B$ on a three-manifold $M$ is a two-complex consisting of a finite union surfaces that combine along the 1-skeleton giving it a well-defined tangent space at every point, and generic singularities. Every point $p \in B$ thus has a neighborhood in
$M$ which is homeomorphic to one of the local models in Figure \ref{fig:branched}:
\end{defi}
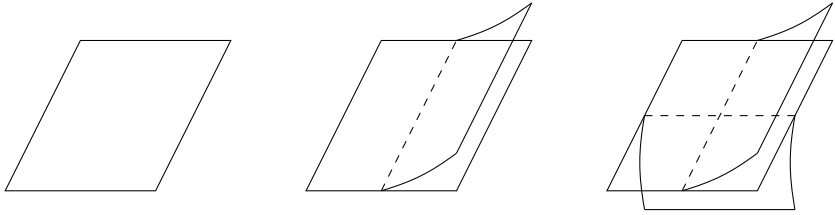
\begin{figure}[h]
	\begin{tikzpicture}
\draw (-7,-1) -- (-6,1) -- (-4,1) -- (-5,-1) -- cycle;

\draw (-3,-1) -- (-2,1) -- (0,1) -- (-1,-1) -- cycle;
	\draw (-1,-0.5) -- (0,1.5);
	\draw (-1,1) to [bend right =10] (0,1.5);
	\draw (-2,-1)  to [bend right =10]  (-1,-0.5);
	\draw[dashed] (-2,-1) -- (-1,1);

\draw (1,-1) -- (2,1) -- (4,1) -- (3,-1) -- cycle;
\draw (3,-0.5) -- (4,1.5);
\draw (3,1) to [bend right =10] (4,1.5);
\draw (2,-1)  to [bend right =10]  (3,-0.5);
\draw[dashed] (2,-1) -- (3,1);
\draw (1.5,-1.25) -- (3.5,-1.25);
\draw[dashed] (1.5,0) -- (3.5,0);
\draw (1.5,0) to [bend right =10] (1.5,-1.25);
\draw (3.5,0) to [bend right =10] (3.5,-1.25);
\end{tikzpicture}	\caption{Local models for a branched surface}\label{fig:branched}
\end{figure}
For branched surface $B$, we define its \textit{branched locus} to be the set of points in $B$ which do not have a neighborhood diffeomorphic to $\mathbb{R}^2$. We call the closure of each component of $B\setminus L$ a \textit{branch sector}. As with train tracks, a branched surface has a well-defined normal bundle in a 3-manifold and for any $\varepsilon>0$ sufficiently small, an $\varepsilon$-tubular neighborhood $N_\varepsilon(B)$ can be foliated by intervals transverse to $B$ as an $I$-bundle in such a way that collapsing these intervals collapses $N(B)$ to a new branched surface which can be canonically identified with $B$. 

We say that a surface $\Sigma$ (or, similarly, a lamination $\cL$) is \textit{fully carried} by $B$ if there exists $\varepsilon>0$ such that $S \subset N_\varepsilon(B)$ transversely intersects every $I$-fiber of $N_\varepsilon(B)$. If $\pi:N(B)\rightarrow B$ is the projection that collapses every $I$-fiber to a point and $b_1,\ldots,b_N$ are the components of $B\setminus L$, we let $x_i= |\Sigma \cap \pi^{-1}(b_i)|$ for each $b_i$. We can describe $\Sigma$ combinatorially via a non-negative integer coordinate $(x_1,\ldots,x_N)\in\mathbb{R}^N$ satisfying an obvious system of branch equations coming the from intersections of the respective branch sectors , see \cite{FlOe84,Oe88}:

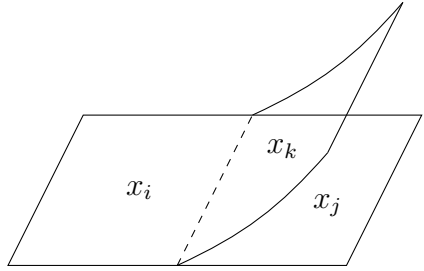
\begin{figure}[h]
	\begin{tikzpicture}

	\draw (-0.25,-1) -- (0.75,1) -- (5.25,1) -- (4.25,-1) -- cycle;
	\draw (4,0.5) -- (5,2.5);
	\draw (3,1) to [bend right =12] (5,2.5);
	\draw (2,-1)  to [bend right =12]  (4,0.5);
	\draw[dashed] (2,-1) -- (3,1);
	
	\node at (1.5,0) {$x_i$};
	\node at (4,-.2) {$x_j$};	
	\node at (3.4,0.55) {$x_k$};
	
\end{tikzpicture}\caption{Branch equations $x_i=x_j+x_k$}\label{fig:brancheq}
\end{figure}

\begin{prop}\label{prop:lam}
Every lamination $\cL$ in $M$ is fully carried by a branched surface $B$.
\end{prop}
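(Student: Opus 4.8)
The statement is the classical ``thicken and collapse'' construction, which produces from any lamination a branched surface carrying it; compare the analogous fact for foliations in \cite{FlOe84,Oe88} and the lamination case in \cite{Li06}. The plan is to reduce to a finite local model, build a transverse arc foliation of a neighborhood of $\cL$, and collapse it.

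First I would reduce to a compact, finite picture. Since $M$ is compact and $\cL$ is closed, $\cL$ is compact, so the defining charts give a \emph{finite} cover $\{\Omega_i\}_{i=1}^{m}$ of $\cL$ under which $\cL$ appears as horizontal slices; after shrinking I may assume $\Phi_i(\Omega_i)=D_i\times(-1,1)$ and $\Phi_i(\cL\cap\Omega_i)=D_i\times K_i$ with $K_i\subset(-1,1)$ closed, and that the arcs $\{x\}\times(-1,1)$ pull back to arcs transverse to the leaves. Only this local product form and the finiteness will be used, so neither the transverse structure of $\cL$ (whether its leaves are isolated, dense, or transversally a Cantor set) nor the merely $C^{0,\alpha}$ transverse regularity will cause trouble --- the output is a topological object, and one may remember only the transverse combinatorics of the charts.

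Next I would manufacture a transverse arc foliation of a neighborhood and collapse it. At each point near $\cL$ the line directions transverse to the (smooth) leaf through it form the complement of a projective line in $\RR P^{2}$, which is an affine, hence contractible, space; so a partition of unity subordinate to $\{\Omega_i\}$ averages the ``vertical'' directions of the several charts into a single continuous line field $\nu$, defined on a neighborhood of $\cL$ and everywhere transverse to $\cL$. For $\varepsilon$ small the integral arcs of $\nu$ foliate $N:=N_\varepsilon(\cL)$ by arcs transverse to $\cL$, compatibly with every chart, giving a (possibly twisted) $I$-bundle neighborhood of $\cL$. Collapsing each $\nu$-arc to a point defines a quotient $\pi\colon N\to B$; inside $\Omega_i$ this is the product projection $D_i\times(-1,1)\to D_i$, so $B$ is a finite union of disks glued along the identifications their overlaps inherit from $\pi$. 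Every leaf of $\cL$ is transverse to every $\nu$-arc, so $\cL\subset N=N_\varepsilon(B)$ and $\cL$ meets each $I$-fiber transversely; hence $\cL$ is fully carried by $B$ \emph{as soon as $B$ is genuinely a branched surface}.

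That last proviso is the main obstacle. One must choose $\nu$ --- equivalently, choose the charts and the averaging --- so that at every point of $B$ the disks $\pi(\Omega_i)$ fit together in one of the standard local models of Figure~\ref{fig:branched}, i.e. a smooth sheet, two sheets merging tangentially along an arc, or the triple cusp configuration, rather than in some degenerate higher-order singularity. The non-manifold points of $B$ come from finitely many ``transition loci'' in $N$ --- the places where the combinatorial pattern of $\nu$-arcs relative to $\partial N$ and to the chart overlaps changes --- and a generic perturbation of $\nu$, kept transverse to $\cL$ so that the carrying property survives, puts these loci, and the sheets meeting along them, into the required normal form. This finite general-position argument is the only substantive step; the interpolation producing the global line field $\nu$ is a mild instance of the same issue, and compactness, the local product structure, and the collapse itself are routine.
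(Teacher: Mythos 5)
Your proposal is correct in outline, but it takes a genuinely different route from the paper's own sketch. You give the general ``thicken and collapse'' argument: pass to a finite atlas of lamination charts, average the vertical directions by a partition of unity to get a transverse arc foliation of a neighborhood, collapse the arcs, and then use a general position/perturbation step to force the quotient into the local models of Figure~\ref{fig:branched}. This is essentially the argument in the references the paper cites (\cite{GaOe89,Ha80s}), and it proves the proposition as stated for an arbitrary lamination. The paper instead takes advantage of the setting actually needed for Theorem~\ref{thm:main}: the lamination there is a limit of embedded minimal surfaces with a uniform curvature bound, so by the argument of \cite{MeRo06} one can cover $\cL$ by small extrinsic balls $B_r(x_i)$ in which every leaf is a graph of small gradient over a single disk through $x_i$; the branched surface is then produced directly by isotoping and gluing those explicit disks, with no global line field or abstract collapse involved. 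The trade-off: your approach is fully general, at the cost of the two hand-wavy steps you flag yourself --- interpolating the merely $C^{0,\alpha}$ transverse structure into a single continuous arc foliation (where one cannot simply integrate a $C^0$ line field and must instead patch product structures), and the finite general-position argument that rules out degenerate singularities in the quotient. The paper's geometric shortcut sidesteps the line-field construction entirely, because in the bounded-curvature minimal setting the ``branch sectors'' are literally the graphical disks, making the gluing concrete; on the other hand it only applies to the laminations arising in this problem. Both are sketches at roughly the same level of rigor, and yours is a reasonable blind reconstruction of what the cited general references do.
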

\begin{proof}
This follows by general topological arguments, see \cite{GaOe89,Ha80s}. However, we sketch an argument for the laminations relevant to the proof of Theorem \ref{thm:main}.

Suppose $\cL$ is a minimal lamination which is the limit of a sequence of closed embedded minimal surfaces $\Sigma_j$ with $|\textrm{II}_{\Sigma_j}|\leq C$ for some fixed $C>0$. Following a standard argument on page 475 of \cite{MeRo06}, we may find $r>0$ sufficiently small and cover $\cL$ with extrinsic balls $B_r(x_1),B_r(x_2),\ldots,B_r(x_k)$, where $x_1,x_2\ldots,x_k$  belong to $\cL$, such that the intersection of any leaf with any of the balls $B_r(x_i)$ is either empty or the graph of a function with small gradient over a subset of a disk passing through $x_i$. Such discs can be the isotoped and glued to form the desired branched surface.
\end{proof}	
\begin{rema}\label{rem:branched}
We remark that the argument above implies the constructed branched surface $B$ also carry the surfaces $\Sigma_j$ or $j$ sufficiently large. In fact, $\Sigma_j \subset N_{2r}(B)$.
\end{rema}

\subsection{Local structure and surgery theorems} In Remark \ref{rem:stable}, we saw that a sequence of stable minimal surfaces converges, locally smoothly and after passing to a subsequence, to a minimal lamination. In this section we recall the main results of \cite{CKM17}, which guarantees that closed embedded minimal surfaces with uniformly bounded index behave similarly qualitatively, up to controlled errors. What follows comprises of combining Theorem 1.17 and Corollary 1.19 of \cite{CKM17}:

\begin{theo}\label{thm:ckm}
There exist functions $\tilde r(I)$ and $\tilde m(I)$ with the following property. Fix a closed three-manifold $(M^{3},g)$ and a natural number $I \in \NN$. Suppose $\Sigma_{j}\subset (M,g)$ is a sequence of closed embedded minimal surfaces with $\Index(\Sigma_{j})\leq I$. Then, after passing to a subsequence, there is $C>0$ and a finite set of points $\cB_{j}\subset \Sigma_{j}$ with cardinality $|\cB_{j}|\leq I$ so that the curvature of $\Sigma_{j}$ is uniformly bounded away from the set $\cB_{j}$, i.e.,
\[
|\sff_{\Sigma_{j}}|(x) \min\{1,d_{g}(x,\cB_{j})\}\leq C,
\]
but not at $\cB_{j}$, i.e.,
\[
\liminf_{j\to\infty} \min_{p\in\cB_{j}}|\sff_{\Sigma_{j}}|(p) = \infty.
\]
Passing to a further subsequence, the points $\cB_{j}$ converge to a set of points $\cB_{\infty}$ and the surfaces $\Sigma_{j}$ converge locally smoothly, away from $\cB_{\infty}$, to some lamination $\cL \subset M \setminus \cB_{\infty}$. The lamination has removable singularities, i.e., there is a smooth lamination $\widetilde\cL\subset M$ so that $\cL = \widetilde\cL\setminus\cB_{\infty}$. Moreover, there exists $\varepsilon_{0}>0$ smaller than the injectivity radius of $(M,g)$ so that $\cB_{\infty}$ is $4\varepsilon_{0}$-separated and for any $\varepsilon \in (0,\varepsilon_{0}]$, taking $j$ sufficiently large guarantees that there exists embedded surfaces $\widetilde \Sigma_{j}\subset (M^{3},g)$ satisfying:
\begin{enumerate}[itemsep=5pt, topsep=5pt]
	\item The new surfaces $\widetilde\Sigma_{j}$ agree with $\Sigma_{j}$ outside of $B_{\varepsilon}(\cB_{\infty})$. 
	\item The components of $\Sigma_{j}\cap B_{\varepsilon}(\cB_{\infty})$ that do not intersect the spheres $ \partial B_{\varepsilon}(\cB_{\infty})$ transversely and the components that are topological disks appear in $\widetilde\Sigma_{j}$ without any change.
	\item The curvature of $\widetilde\Sigma_{j}$ is uniformly bounded, i.e.
	\[
	\limsup_{j\to\infty}\sup_{x\in\widetilde\Sigma_{j}}|\sff_{\widetilde\Sigma_{j}}|(x) <\infty.
	\]
	\item Each component of $\widetilde\Sigma_{j}\cap  B_{\varepsilon}(\cB_{\infty})$ which is not a component of $\Sigma_{j}\cap B_{\varepsilon}(\cB_{\infty})$ is a topological disk of area at most $2\pi\varepsilon^{2}(1+o(\varepsilon))$.
	\item The genus drops in controlled manner, i.e.,
	\[
	\genus(\Sigma_{j})-\tilde r(I) \leq \genus(\widetilde\Sigma_{j}) \leq \genus(\Sigma_{j}).
	\]
	\item The number of connected components increases in a controlled manner, i.e.,
	\[
	|\pi_{0}(\Sigma_{j})|\leq |\pi_{0}(\widetilde\Sigma_{j})| \leq |\pi_{0}(\Sigma_{j})| + \tilde m(I).
	\]
	\item While $\widetilde\Sigma_{j}$ is not necessarily minimal, it is asymptotically minimal in the sense that $\lim_{j\to\infty} \Vert H_{\widetilde\Sigma_{j}}\Vert_{L^{\infty}(\widetilde\Sigma_{j})} = 0$.
\end{enumerate}
Finally, the new surfaces $\widetilde\Sigma_{j}$ converge locally smoothly to the smooth minimal lamination $\widetilde \cL$.
\end{theo}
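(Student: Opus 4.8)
The statement is a synthesis of the two main structural results of \cite{CKM17}, so the plan is to reconstruct its proof from the blow-up analysis for bounded-index minimal surfaces. I would begin with a \emph{point-selection / bubbling} argument. If the $|\sff_{\Sigma_j}|$ fail to be uniformly bounded, choose points $p_j^1\in\Sigma_j$ at which $\lambda_j^1:=|\sff_{\Sigma_j}|(p_j^1)\to\infty$ essentially realizes the maximum of $|\sff_{\Sigma_j}|$, rescale the ambient metric by $\lambda_j^1$ about $p_j^1$, and extract a limit: by the choice of scale the rescalings converge to a complete embedded minimal surface $\Sigma_\infty^1\subset\RR^3$ which is non-flat (its second fundamental form has norm $1$ at the origin) and of index $\le I$. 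By the theorem of Fischer--Colbrie such a surface has finite total curvature, and since the only complete \emph{stable} minimal surface in $\RR^3$ is a plane, each such bubble must carry at least one unit of index. I would then iterate this at separated scales --- each new center chosen where the previous rescaled picture still has unbounded curvature --- and use the standard fact that bubbles with well-separated centers and scales contribute to the index through eigenfunctions with disjoint supports, so their indices add. Hence the procedure terminates after at most $I$ steps, producing the finite set $\cB_j$ with $|\cB_j|\le I$ and the two curvature statements asserted in the theorem.

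Given uniform curvature bounds on $M\setminus B_\rho(\cB_j)$ for each fixed $\rho$, a subsequence of $\Sigma_j$ converges in $C^\infty_{\mathrm{loc}}(M\setminus\cB_\infty)$, with finite multiplicity, to a minimal lamination $\cL$ (the compactness invoked in Remark \ref{rem:stable}). For removable singularities I would use that $\cB_\infty$ is finite, hence negligible for laminations, together with the uniform local area bounds near $\cB_\infty$ that follow from the bubble structure and the monotonicity formula, and invoke the removable-singularity theorem for minimal laminations to produce a smooth lamination $\widetilde\cL\supset\cL$ with $\cL=\widetilde\cL\setminus\cB_\infty$. The separation of $\cB_\infty$ at scale $4\varepsilon_0$ is arranged by shrinking $\varepsilon_0$ below a fraction of the minimal distance between distinct points of $\cB_\infty$ and below the injectivity radius.

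For the surgery I would work in a fixed small ball $B_\varepsilon(p)$ around each $p\in\cB_\infty$, $\varepsilon\le\varepsilon_0$. The bubble analysis describes $\Sigma_j\cap B_\varepsilon(p)$, for $j$ large, as a bounded number (bounded in terms of $I$) of nearly flat sheets joined by thin, nearly catenoidal necks, together with a controlled number of small closed or disk-type components; the count of necks and the genus they carry are bounded by $I$ through the Jorge--Meeks / Chern--Osserman relation between total curvature, number of ends, and genus of the blow-up limits, combined with the index estimates for minimal surfaces in $\RR^3$. The surgery cuts each thin neck and caps the two resulting boundary circles by small graphical disks. Then: $\widetilde\Sigma_j$ equals $\Sigma_j$ outside $B_\varepsilon(\cB_\infty)$ (1); disk components and components meeting $\partial B_\varepsilon(\cB_\infty)$ non-transversally are left untouched (2); each cut removes at most a handle, so the genus drops by at most $\tilde r(I)$ (5); each cut splits a component into at most two, so $|\pi_0|$ grows by at most $\tilde m(I)$ (6); each capping disk is graphical of area $\le 2\pi\varepsilon^2(1+o(\varepsilon))$ (4); with the necks excised, $|\sff_{\widetilde\Sigma_j}|$ becomes uniformly bounded (3); and since each capping disk is a small graph over a nearly minimal neck, it can be chosen so that $\Vert H_{\widetilde\Sigma_j}\Vert_{L^\infty}\to 0$ (7). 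Uniform curvature bounds then give subsequential $C^\infty_{\mathrm{loc}}$ convergence of $\widetilde\Sigma_j$, and since $\widetilde\Sigma_j=\Sigma_j$ off $B_\varepsilon(\cB_\infty)$ for every $\varepsilon$, the limit lamination must be the smooth minimal $\widetilde\cL$.

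The main obstacle is making the surgery genuinely \emph{topologically controlled}: I must show the number and the total genus of the necks to be cut depend on $I$ alone, uniformly in $j$ and in the local geometry of $(M,g)$. This requires (i) a clean, scale-invariant description of each bubble's neighborhood as a finite tree of sheets and necks, which is delicate because bubbles can occur at many disparate and nested scales, and (ii) converting the index bound into genus and end bounds for the blow-up limits, which rests on the structure theory of finite-total-curvature minimal surfaces in $\RR^3$. Verifying that the capped surfaces remain embedded, and the bookkeeping that all error terms genuinely depend only on $I$, are the remaining technically hard points.
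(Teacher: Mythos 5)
This theorem is not proved in the paper at all: it is imported verbatim as a combination of Theorem 1.17 and Corollary 1.19 of \cite{CKM17}, so there is no in-paper argument to compare yours against. What you have written is a reconstruction of the proof of the cited result, and at the level of strategy it is faithful to what Chodosh--Ketover--Maximo actually do: a point-picking/bubbling argument in which each non-flat blow-up limit is a complete embedded minimal surface in $\RR^3$ of finite total curvature (Fischer--Colbrie) carrying at least one unit of index, with indices of well-separated bubbles adding via cut-off negative directions, so that at most $I$ concentration points occur; lamination compactness away from $\cB_\infty$; removable singularities; and a neck-and-cap surgery whose topological cost is controlled through the total curvature (Chern--Osserman/Jorge--Meeks) of the blow-up limits.

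Two points in your sketch are inaccurate, and one is a genuine gap you yourself flag. First, the convergence to $\cL$ is not ``with finite multiplicity'': the absence of area bounds is precisely why the limit object is a lamination rather than a minimal surface with integer multiplicity, so finite multiplicity cannot be assumed or used. Second, the removable-singularity step cannot rest on ``uniform local area bounds near $\cB_\infty$'' plus monotonicity --- no such area bounds exist in this setting. The correct mechanism is the scale-invariant curvature estimate $|\sff|(x)\,d_g(x,\cB_j)\leq C$, which passes to the leaves of $\cL$ and, combined with a removable-singularity theorem for minimal laminations with this curvature decay (using stability of all but finitely many nearby sheets), yields the smooth extension $\widetilde\cL$. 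Finally, the heart of the matter --- the multi-scale decomposition of $\Sigma_j\cap B_\varepsilon(\cB_\infty)$ into sheets and necks with the number of necks, boundary circles, and lost genus bounded purely in terms of $I$, uniformly in $j$, together with embeddedness of the capped surface --- is named but not carried out; this is the bulk of the work in \cite{CKM17} and cannot be dismissed as bookkeeping. As an identification of the proof strategy for a black-boxed citation your proposal is sound; as a self-contained proof it is not complete.
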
 

\section{End of proof of Theorem \ref{thm:main}}

We argue by contradiction: if the desired constant does not exist, we may find a sequence $\Sigma_j$ of closed, two-sided, embedded minimal surfaces with $\Index(\Sigma_j)\leq I$ and such that:\
\begin{equation}\label{eqproof}
\frac{\genus(\Sigma_j)}{\area(\Sigma_j)} \nearrow \infty.
\end{equation}
By Theorem 1.1 of \cite{CKM17}, this implies $\area(\Sigma_j)\rightarrow\infty$; otherwise, bounded index and bounded area would imply bounded genus and thus contradict \eqref{eqproof}.

Passing to a subsequence if necessary, we may then apply Theorem \ref{thm:ckm} to $\Sigma_j$ and obtain, after surgery, a sequence of nearly minimal surfaces $\widetilde\Sigma_{j}$ which, by $(4), (5), (6)$, we may also assume are connected and  satisfy:
\begin{equation}\label{eqprooftil}
\frac{\genus(\widetilde\Sigma_j)}{\area(\widetilde\Sigma_j)} \nearrow \infty.
\end{equation}
This crucially uses the fact that the surgery procedure deletes at most $I$ components and replaces them with disks of comparable area.

Moreover, again by Theorem \ref{thm:ckm}, $\widetilde\Sigma_j$ converges locally smoothly to a smooth minimal lamination $\widetilde{\cL}$. By Proposition \ref{prop:lam}, we can find a branched surface $B$ that fully carries $\widetilde{\cL}$. As in Section \ref{sec:branched}, let $L$ be the branched locus of $B$ and $b_1,b_2,\ldots,b_N$ its branched sections, $i.e.$, the components of $B\setminus L$. Each $\Sigma_j$ then correspond to an non-negative integer coordinate:
$$\Sigma_j \longrightarrow (x^j_1,x^j_2\ldots,x^j_N)\in\mathbb{R}^N.$$
Moreover, by Remark \ref{rem:branched}, $\Sigma_j \subset N_{2r}(B)$ for $j$ sufficiently large, and it may be reconstructed from disks which are graphical over the sectors $b_1,\ldots,b_N$. Thus, 
$$x^j_1|b_1|+ x^j_2|b_2| +\cdots +x^j_N|b_N| = {O}(|\Sigma_j|),$$
where $|b_i|$ is the area of the branch sector $b_i$. 

On other hand, the Euler characteristic of can also be estimated using a triangulation obtained by gluing the graphical pieces over the sectors $b_1,\ldots,b_N$. This yields the estimate
$$ |\chi(\Sigma_j)| = {O}(x_1+x_2+\ldots+x_n)$$
which contradicts equation \eqref{eqprooftil} and we are done.

\bibliography{bib} 
\bibliographystyle{amsalpha}
\end{document}